\documentclass[a4paper]{amsart}
\usepackage[textwidth=15cm,top=3cm, bottom=3cm, hcentering]{geometry}
\usepackage[colorlinks=true,linkcolor=red,citecolor=blue]{hyperref}

\usepackage{amsmath,amscd,amsthm,amssymb}
\usepackage{color}
\usepackage[utf8]{inputenc}
\normalfont
\usepackage[T1]{fontenc}
\usepackage{tikz-cd}
\usepackage{tikz}
\usepackage{enumerate}
\setcounter{tocdepth}{2}

\usepackage{enumitem}
 \usepackage[all]{xy}

%%%%%%%%%%%%%%%%%%%%%%%%%%%%%%%%%%%%%%%%%%%%%%%%%%%%%%%%%%%%%%%%%%%%%%%%%%%%%%%%%%%%%%

%teoremes
\theoremstyle{theorem}
\newtheorem{theo}{Theorem}[section]

\newtheorem{prop}[theo]{Proposition}

%definicions, exemples
\theoremstyle{definition}
\newtheorem{defi}[theo]{Definition}
\newtheorem{rema}[theo]{Remark}

\newcommand{\RR}{\mathbb{R}}

\newcommand{\Aa}{\mathcal{A}}
\newcommand{\Mm}{\mathcal{M}}

\newcommand{\Ll}{\mathcal{L}}
\newcommand{\Cc}{\mathcal{C}}
\newcommand{\CH}{\mathrm{CH}}
\newcommand{\Cyc}{\mathrm{CC}}
\newcommand{\dR}{\mathrm{dR}}

\setcounter{tocdepth}{1}

\title{An operadic proof of the BTT theorem}

\author{Joana Cirici}
\author{Geoffroy Horel}

\address[J. Cirici]{Departament de Matemàtiques i Informàtica, Universitat de Barcelona\\
Gran Via 585\\
08007 Barcelona, Spain  / Centre de Recerca Matemàtica, Edifici C, Campus Bellaterra, 08193 Bellaterra, Spain}
\email{jcirici@ub.edu}

\address[G. Horel]{Université Sorbonne Paris Nord, Laboratoire Analyse, Géométrie et Applications, CNRS (UMR 7539), 93430, Villetaneuse, France.}
\email{horel@math.univ-paris13.fr}

\thanks{
J. Cirici acknowledges 
Govern de Catalunya (2021-SGR-00697 and Serra H\'{u}nter Program) and the Spanish State Research Agency (CEX2020-001084-M and PID2020-117971GB-C22). Both authors are funded by the  Agence Nationale pour la Recherche through project ANR-20-CE40-0016 HighAGT and by the Centre National pour la Recherche Scientifique through project IEA00979.
}

\begin{document}
	
\begin{abstract}
In this note, we explain an operadic proof of the BTT Theorem stating that the deformation theory of Calabi-Yau varieties is unobstructed. We also provide a short new proof of the non-commutative BTT for Calabi-Yau dg-categories.
Finally, we observe that our proof also produces partial results in positive characteristic.
\end{abstract}
	
\maketitle
\tableofcontents
\section{Introduction}
The classical Bogomolov-Tian-Todorov (BTT) theorem states that deformations of cohomologically Kähler manifolds with trivial canonical bundle are unobstructed \cite{Bolg}, \cite{Tian}, \cite{Todo}. 
More abstractly, the BTT theorem may be stated as the underlying Lie algebra of a $BV$-algebra satisfying
the so-called degeneration condition to be quasi-abelian (see \cite{Terilla}, \cite{KaKoPa}, see also \cite{BrLa} for a generalization in terms of $BV_\infty$-algebras).
The proofs therein follow after recursively solving the Maurer-Cartan or master equation.

In this note, we explain an operadic proof of a slight generalization  of the abstract BTT Theorem, based on the formality of compactified moduli spaces of genus zero curves. Our approach allows for an extension to fields of positive characteristic.

\begin{theo}\label{maintheo}
Let $K$ be a field. Let $A$ be a dg-algebra over the framed little disks operad.
Assume that $A$ satisfies the degeneration condition. Then
\begin{enumerate}
\item if the field $K$ is of characteristic zero, then the underlying Lie algebra of $A$ is quasi-abelian.
\item if the field $K$ is of characteristic $p$, then the underlying Lie algebra of $A$ is quasi-abelian up to arity $p-1$.
\end{enumerate}
\end{theo}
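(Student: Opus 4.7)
The plan is to reformulate the hypothesis and conclusion operadically, and then derive the result from the formality of the compactified moduli spaces $\overline{\mathcal{M}}_{0,n+1}$. Since the homology of the framed little disks operad is the BV operad, a dg-algebra over the framed little disks is, on homology, a BV-algebra. First, I would give an operadic reformulation of the degeneration condition in the spirit of Barannikov--Kontsevich and Drummond-Cole: it should be equivalent to the existence of a lift of the framed little disks algebra structure on $A$ to an algebra structure over a chain model of the hypercommutative operad $\mathrm{Hycom} = H_*(\overline{\mathcal{M}}_{0,\bullet+1})$. This converts the theorem into a statement internal to algebras over this chain model.

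Second, quasi-abelianness of the underlying Lie algebra is the statement that the map $\mathrm{Lie} \to \mathrm{End}(A)$ induced by the Gerstenhaber bracket is null-homotopic as an $L_\infty$-morphism. Since $\mathrm{Hycom}$ is generated by fully $S_n$-symmetric operations, the antisymmetric Lie bracket admits no nontrivial image in $\mathrm{Hycom}$, and must therefore be trivialized after passing through the lift. To produce the actual chain-level null-homotopy on $A$, I would use the formality of $\overline{\mathcal{M}}_{0,n+1}$: as a smooth projective variety its rational cohomology is pure, and standard purity-implies-formality arguments (with some care to preserve the operad structure coming from gluing of stable curves along marked points) upgrade this to a formality statement for the chain operad of $\overline{\mathcal{M}}_{0,\bullet+1}$. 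Transporting this formality through the lift from the first step then yields the desired null-homotopy of the bracket on $A$.

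Third, for part (2), formality of $\overline{\mathcal{M}}_{0,n+1}$ over a field of characteristic $p$ is known only up to arity $p-1$, via Deligne--Illusie-type decompositions of the mod-$p$ de Rham complex which fail beyond degree $p-1$. This is precisely the source of the arity restriction in the conclusion, and matches the expectation that partial characteristic-$p$ formality yields partial quasi-abelianness. The main obstacle I anticipate is twofold: first, setting up the operadic lift from the degeneration condition to a $\mathrm{Hycom}$-type chain structure in a way that is robust enough to survive working with partial formality in positive characteristic; and second, verifying that the null-homotopy of the Lie bracket is inherited at each arity up to the formality threshold, with no further obstructions coming from the embedding of the Lie suboperad into the chains of the framed little disks.
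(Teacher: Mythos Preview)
Your overall architecture matches the paper's proof exactly: use Drummond-Cole to factor the $FD_2$-structure on $A$ through the chains of $\overline{\mathcal{M}}$ whenever the degeneration condition holds, invoke formality of $\overline{\mathcal{M}}$ to reduce to $Hycom$, and then argue that the composite map from the shifted Lie operad is null. Two of your three steps are correctly identified, but the remaining details are not right in two places.

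First, your reason for the vanishing of the map $\Sigma^2 Lie \to Hycom$ is not the one that actually works. The claim that ``the antisymmetric Lie bracket admits no nontrivial image in $Hycom$'' is both fragile and incomplete: after the operadic double suspension the arity-$2$ generator is no longer antisymmetric in the na\"ive sense, and in any case a symmetry argument in arity $2$ says nothing about the higher $L_\infty$-generators, which is what you need to control to get a nullhomotopy of operad maps rather than just vanishing of the bracket. The paper's argument is cleaner and uniform: replace $\Sigma^2 Lie$ by its cofibrant model $\Sigma^2 L_\infty$, observe that the generators of $\Sigma^2 L_\infty$ all sit in \emph{odd} homological degree, while $Hycom(n)=H_*(\overline{\mathcal{M}}_{0,n+1})$ is concentrated in \emph{even} degree. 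Hence already at the level of strict maps the set $\mathrm{Hom}_{\mathrm{Op}}((\Sigma^2 L_\infty)_{\leq n},Hycom_{\leq n})$ is a singleton, and the same holds for homotopy classes.

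Second, your diagnosis of the characteristic-$p$ restriction is off. In the paper, formality of $(\overline{\mathcal{M}}\otimes K)_{\leq n}$ holds without arity restriction in positive characteristic (via weight/\'etale arguments, not Deligne--Illusie). The bound $n\leq p-1$ enters on the \emph{source} side: one needs $(\Sigma^2 L_\infty)_{\leq n}$ to be a cofibrant model for $(\Sigma^2 Lie)_{\leq n}$, and this holds precisely when $n$ is below the characteristic. So the ``main obstacle'' you anticipate in the last paragraph is not where the difficulty lies; the actual issue is that $L_\infty$ fails to resolve $Lie$ beyond arity $p-1$ over a field of characteristic $p$.
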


The degeneration condition is explained in Section \ref{sec : degeneration}. It amounts to the homotopical triviality of the action of the arity $1$ operations of the operad on the algebra. This theorem is homotopy invariant, and so it applies to an algebra over any operad that has the homotopy type of the framed little disks operad. In particular, by formality of that operad in characteristic zero, it applies to the $BV$ and $BV_\infty$ operads.

We shall also prove a ``quantum'' BTT theorem generalizing work of \cite{Terilla}, \cite{BrLa} and \cite{Tu}. In order to make sense of this theorem, we need to introduce some terminology. We shall assume that we are working over a field of characteristic zero. Let $A$ be a dg-algebra over the framed little disks operad. We can restrict to arity $1$ operations and this induces an $SO(2)$-action on $A$. We denote by $A^{hSO(2)}$ the homotopy fixed points of this action. In many cases, this can be constructed very explicitly using a negative cyclic complex. It can be shown that $A^{hSO(2)}$ inherits a natural action of the operad $Grav$, a spectral version of the gravity operad (see \cite[Corollary 2.4]{westerlandequivariant}). This operad receives a map from the shifted Lie operad $\Sigma^2 Lie$.

\begin{theo}\label{maintheo2}
Let $K$ be a field of characteristic zero. Let $A$ be a dg-algebra over the framed little disks operad.
Assume that $A$ satisfies the degeneration condition. 
Then the $Grav$-algebra $A^{hSO(2)}$ is quasi-abelian. In particular it is also quasi-abelian as a $\Sigma^2Lie$-algebra.
\end{theo}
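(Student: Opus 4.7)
The plan is to mirror the argument of Theorem \ref{maintheo}, replacing the target operad $\Sigma^2 Lie$ by the full gravity operad $Grav$ and carrying out the argument on $A^{hSO(2)}$ rather than on $A$ itself. The two main ingredients to import from the proof of Theorem \ref{maintheo} are (i) formality of the framed little disks operad in characteristic zero (Giansiracusa--Salvatore, Severa), and (ii) the operadic reformulation of the degeneration condition as the $E_1$-degeneration of a suitable $u$-filtration.

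In more detail, formality lets us reduce to the case where $A$ is a strict $BV$-algebra, that is, an algebra over $BV = H_*(fD_2)$. The degeneration condition then takes the concrete form that the negative cyclic complex $(A[[u]], d + u\Delta)$ degenerates at $E_1$, so that this complex provides an explicit model for $A^{hSO(2)}$ together with a coherent nullhomotopy for the BV operator $\Delta$. The gravity operad acts on $A^{hSO(2)}$ via Westerland's construction (cf.\ \cite[Corollary 2.4]{westerlandequivariant}). The heart of the argument is to express every gravity operation $\{a_1, \ldots, a_n\}$ on $A^{hSO(2)}$ in terms of $\Delta$ and the commutative product on $A$, and to deduce its triviality from the nullhomotopy of $\Delta$ supplied by degeneration. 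Equivalently, one can extend the $Grav$-structure to a hypercommutative structure on $A^{hSO(2)}$ after Barannikov--Kontsevich, and observe that the antisymmetry inherited from $\Sigma^2 Lie \subset Grav$ forces the resulting operations to vanish against the fully symmetric hypercommutative products.

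The main technical obstacle is to organise these elementary observations into a genuine quasi-isomorphism of $Grav$-algebras, rather than only arity-by-arity nullhomotopies. Treating each gravity bracket in isolation is easy once $\Delta$ has been trivialized, but ensuring compatibility with the full set of gravity relations across all arities requires the nullhomotopy to be produced at the operadic level. As in the proof of Theorem \ref{maintheo}, I expect the cleanest route is to transport the degeneration datum through an operadic comparison map between $Grav$ and a suitable resolution built from the formal model of $fD_2$, so that the trivialization is constructed coherently once and for all rather than element by element.
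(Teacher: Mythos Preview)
Your proposal sketches the right overall shape---factor the $Grav$-action on $A^{hSO(2)}$ through something hypercommutative coming from the degeneration hypothesis, then argue that the resulting $Grav$-structure is forced to be trivial---but the step where you actually claim triviality is not correct as stated, and the paper's argument is both different and substantially cleaner.

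The gap is in your proposed mechanism for vanishing. You write that ``the antisymmetry inherited from $\Sigma^2 Lie \subset Grav$ forces the resulting operations to vanish against the fully symmetric hypercommutative products.'' This does not work: the gravity operations beyond arity~$2$ are not antisymmetric, and $Hycom(n)=H_*(\overline{\mathcal{M}}_{0,n+1})$ carries nontrivial $\Sigma_n$-representations, so a symmetry/antisymmetry clash does not force anything. The actual obstruction is a \emph{degree parity} argument, and it needs the minimal model of $Grav$: since $Grav$ is formal and Koszul dual to $Hycom$, its cofibrant replacement is $\Omega(\Sigma^{-2}Hycom^{\vee})$, which is free on generators of odd degree, whereas the target $(\overline{\mathcal{M}}\otimes K)^{hSO(2)}\simeq Hycom[[u]]$ is concentrated in even degree. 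That is Proposition~\ref{prop : operadic2}, and it is what does the real work.

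The paper's proof then bypasses all of your explicit manipulations with $BV$ formulas and nullhomotopies of $\Delta$. It stays at the operadic level throughout: the degeneration hypothesis, via the main theorem of \cite{drummondhomotopically}, factors the classifying map $FD_2\to\mathrm{End}_A$ through $\overline{\mathcal{M}}$; applying $(-)^{hSO(2)}$ gives a factorization
\[
Grav\longrightarrow(\overline{\mathcal{M}}\otimes K)^{hSO(2)}\longrightarrow \mathrm{End}_{A^{hSO(2)}},
\]
and the first arrow is nullhomotopic by the parity argument above. This handles all arities simultaneously and dissolves the coherence problem you flag in your last paragraph. Your worry there is well placed---arity-by-arity nullhomotopies of individual gravity brackets do not assemble into a nullhomotopy of operad maps---but the remedy is not a more elaborate explicit trivialization; it is to never leave the operadic level in the first place.
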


\subsection*{Notations}

Given a spectrum $X$ and a commutative ring $R$, we write $X\otimes R$ for the chain complex corresponding to the generalized Eilenberg-MacLane spectrum $X\otimes HR$. We extend this notation to $X$ a space so that $X\otimes R:=\Sigma^{\infty}_+X\otimes R$.

Upper indices follow the cohomological grading convention while lower indices follow the homological grading convention.

\section{A bestiary of operads}

Throughout, we denote by $Lie$ the spectral Lie operad. This is defined as the Koszul dual operad to the spectral commutative operad and also as the operad of Goodwillie derivatives of the identity functor on pointed spaces. We can tensor this operad with any commutative ring $R$ and obtain an operad in the category of modules over that ring which can be thought of as a differential graded operad over that ring. The homotopy/homology groups of $Lie\otimes R$ are simply given by the (desuspended) algebraic Lie operad over $R$, i.e. the operad encoding the structure of a dg-Lie algebra over $R$ with Lie bracket of degree $-1$. We shall denote this algebraic version by $Lie$ as well as this should not lead to any confusion.

There are maps
\[\Sigma^2Lie\to \Sigma^{\infty}_+D_2\to \Sigma^{\infty}_+FD_2\]
where $D_2$ and $FD_2$ denote the (non-unital) topological little disks operad and framed little disks operad respectively. If we tensor the above maps with a commutative ring $R$ and take homotopy groups, these produce the maps of algebraic operads
\[\Sigma^2 Lie\to Ger\to BV\]
where $Ger=H_*(D_2,R)$ is the Gerstenhaber operad and $BV=H_*(FD_2,R)$ is the Batalin-Vilkovisky operad. The composite of these two maps captures the fact that a $BV$-algebra has an underlying Lie algebra with a bracket of degree $1$.

We can extend this diagram of operads to the left and to the right. For this recall that the little $2$-disks operad $D_2$ has an action of the circle group $SO(2)$. This action is what allows to produce the framed little disks operad $FD_2$ as a semi-direct construction introduced in \cite{salvatorewahl}~:
\[FD_2:=D_2\rtimes SO(2).\]
We can then quotient the framed little disks operad by forcing the arity $1$ space to be contractible. By the main theorem of \cite{drummondhomotopically}, we obtain the operad $\overline{\Mm}$ of compactified moduli spaces of curves~:
\[\overline{\Mm}(n):=\overline{\Mm}_{0,n+1}\]
We can also take $SO(2)$ fixed points of the spectral operad $\Sigma^{\infty}_+D_2$ and obtain a spectral version of the gravity operad of Getzler (see \cite{westerlandequivariant}). For lack of a better name, we shall still call this operad gravity and denote it by $Grav$~:
\[Grav:=(\Sigma^{\infty}_+D_2)^{hSO(2)}.\]
We thus have the following diagram of spectral operads
\[\Sigma^2Lie\to Grav\to \Sigma^{\infty}_+D_2\to \Sigma^{\infty}_+FD_2\to \Sigma^{\infty}_+\overline{\Mm}\]

After tensoring with a commutative ring $R$ and taking homotopy groups, we obtain the following diagram of algebraic operads
\[\Sigma^2Lie\to Grav\to Ger\to BV\to Hycom\]

\section{The degeneration condition}\label{sec : degeneration}

The degeneration condition is introduced and studied in 
\cite{Terilla} and \cite{KaKoPa} for BV-algebras (see also  \cite{BrLa} and 
\cite{DSV} for a generalization to $BV_\infty$-algebras).
 We rewrite this condition in the following general form:

% \begin{defi}
% Let $\epsilon:B\to K$ be an augmented dg-algebra and $A$ be a dg-$B$-module. We say that the $B$-module structure on $A$ is \textit{homotopically trivial} if there is a zig-zag of quasi-isomorphisms of $B$-modules $A\simeq H$
% where $H$ is a complex of $K$-vector spaces with $B$-module structure given by the augmentation map. 
% \end{defi}

\begin{defi}
Let $P$ be a dg-operad that is quasi-isomorphic to $FD_2\otimes K$, where $K$ is a field. We say that a $P$-algebra $A$ satisfies the \textit{degeneration condition} if the $P(1)$-module structure on $A$ is homotopically trivial: there is a zig-zag of quasi-isomorphisms of $P(1)$-modules $A\simeq H$
where $H$ is a complex of $K$-vector spaces with $P(1)$-module structure given by the augmentation map $P(1)\to K$. 
\end{defi}

\begin{rema}
Observe that the degeneration condition only depends on the underlying $P(1)$-module structure and not on the full $P$-algebra structure. If we assume that $P(1)=K[\Delta]/\Delta^2$ with $|\Delta|=1$ (for example if $P=BV$), then we may form the negative cyclic complex
\[(A\otimes K[[u]],d+u\Delta)\]
where $u$ is a formal variable of degree $-2$. We can view this object as the total complex of a bicomplex with differentials $d$ and $u\Delta$. Then it can be shown that $A$ satisfies the degeneration condition if and only if the associated spectral sequence degenerates at the first page. This is also equivalent to saying that the cohomology of the negative cyclic complex with respect to the total differential $d+u\Delta$ is free as a $K[[u]]$-module (this is \cite[Definition 4.13]{KaKoPa}). This fact can be extended to $BV_\infty$-algebras by introducing a suitable version of the negative cyclic complex. We refer the reader to \cite[Remark 3.14]{BrLa} and \cite[Proposition 1.6]{DSV}.
\end{rema}

% 
% A mixed characteristic analogue
% of the classical BTT theorem has been recently given in \cite{BrTa}, giving sufficient conditions for a smooth 
% proper scheme with trivial canonical bundle, defined over a perfect field of positive characteristic,
% its mixed characteristic formal deformations are unobstructed.

\section{Proof of the main results}\label{secproof}

For a dg-operad $P$, we denote by $P_{\leq n}$ the operad given by
\begin{align*}
P_{\leq n}(k)&=P(k)\textrm{ if } k\leq n,\\
&=0\textrm{ if }k>n
\end{align*}
This is the quotient of the operad $P$ by the ideal of operations of arity greater than $n$.

\begin{prop}\label{prop : operadic}
Let $K$ be a field. The set of homotopy classes of maps of operads 
\[(\Sigma^2 Lie\otimes K)_{\leq n}\longrightarrow (\overline{\mathcal{M}}\otimes K)_{\leq n}\]
is a singleton (given by the zero map) in the following cases:
\begin{enumerate}
\item $K$ is of characteristic zero and $n$ is anything (including $+\infty$),
\item $K$ has positive characteristic and $n$ is smaller than the characteristic of $K$.
\end{enumerate}
\end{prop}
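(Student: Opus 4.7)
The plan is to reduce the problem to a calculation in the cohomology operad $Hycom = H_*(\overline{\mathcal{M}};K)$ by means of a formality statement, and then to exploit the parity of degrees in $Hycom$ via Koszul duality for $\Sigma^2 Lie$.

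\medskip

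The first ingredient is formality of the dg-operad $\overline{\mathcal{M}} \otimes K$. In characteristic zero this is a theorem (ultimately a consequence of the purity of the mixed Hodge structure on $H^*(\overline{\mathcal{M}}_{0,k+1};\mathbb{Q})$, promoted to an operadic statement by Petersen), giving a zig-zag of quasi-isomorphisms $\overline{\mathcal{M}} \otimes K \simeq Hycom \otimes K$ with $Hycom \otimes K$ carrying the zero differential. In positive characteristic $p$ the analogous statement should survive on the arity-$\leq p-1$ truncation: one can run the formality proof with $\mathbb{Z}_{(p)}$-coefficients and observe that the denominators appearing in the comparison quasi-isomorphism in arity $k$ only involve primes up to $k$.

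\medskip

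Granted this reduction, it suffices to show that $[(\Sigma^2 Lie \otimes K)_{\leq n}, (Hycom \otimes K)_{\leq n}]$ is a singleton. Koszul duality for $\Sigma^2 Lie$ (with cofibrant resolution $\Sigma^2 Lie_\infty$) identifies such a homotopy class with a Maurer-Cartan element modulo gauge equivalence in the convolution dg-Lie algebra of $(\Sigma^{-2}Com^*, Hycom_{\leq n})$: explicitly, a sequence of $S_k$-equivariant operations $m_k \in Hycom(k)$ of homological degree $k-1$, for $2 \leq k \leq n$, satisfying the shifted $L_\infty$-relations. The key input is that $Hycom(k) = H_*(\overline{\mathcal{M}}_{0,k+1};K)$ is concentrated in \emph{even} homological degrees, by Keel's theorem on the generation of the cohomology by boundary divisors. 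Since $Hycom(2) = K$ lies in degree $0$, the element $m_2$, which must lie in degree $1$, is forced to vanish; more generally, the same parity obstruction forces $m_k = 0$ for every even $k \leq n$.

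\medskip

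The main obstacle will be to rule out nontrivial $m_k$ for odd $k$ up to gauge equivalence: in those arities the degree $k-1$ is even and $Hycom(k)$ is nonzero in that degree, so $m_k$ can a priori be a nonzero class. I would proceed by induction on $k$: once $m_j$ has been gauged to zero for all $j < k$, the Maurer-Cartan equation at arity $k+1$ reduces to an algebraic identity in $Hycom(k+1)$ controlled by Keel's relations, from which one extracts a gauge transformation witnessing $m_k \sim 0$. The restriction $n < p$ enters precisely here: the exponential of the convolution dg-Lie algebra requires inverting the factorials $k!$ for $k \leq n$, which is available if and only if $n < p$.
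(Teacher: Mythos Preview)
Your overall strategy---formality of the target plus a parity count for the generators of a cofibrant replacement of the source---is exactly the paper's approach. The problem is a degree miscalculation that creates a phantom obstacle.

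Recall that in this paper $Lie$ denotes the \emph{desuspended} algebraic Lie operad (bracket in degree $-1$), so $\Sigma^2 Lie$ is the operad governing Lie brackets of degree $+1$; equivalently it is the single operadic suspension of the ordinary Lie operad. Its minimal model $\Sigma^2 L_\infty$ therefore has its arity-$k$ generator in homological degree
\[
(k-2)+(k-1)=2k-3,
\]
not $k-1$ as you wrote. (The quantity $k-1$ is the degree in which the \emph{homology} $(\Sigma^2 Lie)(k)$ sits, not the degree of the $k$-th $L_\infty$ generator.) Since $2k-3$ is odd for every $k\geq 2$, and $Hycom(k)=H_*(\overline{\mathcal M}_{0,k+1};K)$ is concentrated in even degrees, every $m_k$ vanishes on the nose---there is no ``odd $k$'' case to handle, no induction, and no gauge argument is needed. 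This is precisely what the paper does: once the source is replaced by the free cofibrant model $(\Sigma^2 L_\infty)_{\leq n}$ and the target by $Hycom_{\leq n}$, the set of operad maps is already a singleton before passing to homotopy classes.

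Consequently your identification of where the bound $n<p$ enters is also off. It is not needed to exponentiate anything; it is needed (i) to invoke formality of $(\overline{\mathcal M}\otimes K)_{\leq n}$ in characteristic $p$, and (ii) to know that $(\Sigma^2 L_\infty)_{\leq n}$ really is a cofibrant replacement of $(\Sigma^2 Lie\otimes K)_{\leq n}$, i.e.\ that the Koszul resolution of $Lie$ is still a quasi-isomorphism aritywise below $p$. Your sketch of an inductive gauge argument relying on Keel's relations is therefore unnecessary, and as written it is too vague to evaluate.
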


\begin{proof}
In both cases, the target operad is formal (in the characteristic zero case, this follows from \cite{GNPR} and in the positive characteristic case from \cite[Theorem 6.6]{ciricietale}), so we can replace it by its homology which is just the hypercommutative operad $Hycom$. Moreover, in both cases, the source operad is quasi-isomorphic to the operad $(L_{\infty})_{\leq n}$. In fact, we claim more generally that any dg-operad whose homology is the operad $Lie_{\leq n}$ is quasi-isomorphic to $(L_{\infty})_{\leq n}$ if $n$ is smaller than the characteristic of the field (this is an easy exercise). Moreover, under those assumptions the operad $(\Sigma^2L_\infty)_{\leq n}$ is cofibrant. So we are simply calculating the set of maps $(\Sigma^2L_\infty)_{\leq n}\to Hycom_{\leq n}$ modulo homotopy. But the generators of the operad $\Sigma^2 L_\infty$ are all in odd degree while the operad $Hycom$ is concentrated in even degree so this set of maps is a singleton even before taking homotopy classes.
\end{proof}

\begin{prop}\label{prop : operadic2}
Let $K$ be a field of characteristic zero. Give $\overline{\Mm}$ the trivial $SO(2)$-action. The set of homotopy classes of maps of operads 
\[Grav\longrightarrow (\overline{\Mm}\otimes K)^{hSO(2)}\]
is a singleton (given by the zero map).
\end{prop}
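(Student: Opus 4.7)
The plan is to mirror the strategy of Proposition \ref{prop : operadic}: replace source and target by convenient models, and then run a parity argument.

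For the target, since $SO(2)$ acts trivially on $\overline{\Mm}\otimes K$, the homotopy fixed point operad is just $(\overline{\Mm}\otimes K)\otimes_K C^*(BSO(2); K)$, with $C^*(BSO(2); K)\simeq K[u]$ and $u$ in even cohomological degree $2$. Combined with the formality of $\overline{\Mm}$ in characteristic zero (already invoked in Proposition \ref{prop : operadic}), this yields a quasi-isomorphism with the operad $Hycom\otimes_K K[u]$, which is concentrated in even cohomological degrees.

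For the source, in characteristic zero the operad $Grav\otimes K$ is quasi-isomorphic to Getzler's algebraic gravity operad, thanks to the $SO(2)$-equivariant formality of the little disks operad (as in \cite{westerlandequivariant}). This operad is Koszul with Koszul dual $Hycom$, so the cobar construction $\Omega(Hycom^{\vee})$ is a cofibrant model of $Grav\otimes K$. Since $Hycom$ is concentrated in even degrees, the generators of $\Omega(Hycom^{\vee})$, being a desuspension of the augmentation ideal of the cooperad $Hycom^{\vee}$, lie in odd cohomological degrees.

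The conclusion now follows exactly as in Proposition \ref{prop : operadic}: a map of dg-operads out of the free-as-graded-operad $\Omega(Hycom^{\vee})$ is determined by its restriction to generators, and since these generators live in odd degrees while the target is concentrated in even degrees, this restriction must vanish. Hence the only operad morphism, up to homotopy, is the zero morphism.

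The main point of difficulty is step two, namely constructing a cofibrant model of $Grav$ with odd-degree generators. This relies on combining the $SO(2)$-equivariant formality of $D_2$ (to reduce to the algebraic gravity operad) with Koszul duality for the pair $(Grav, Hycom)$, which in characteristic zero is standard but requires some care with the degree shifts.
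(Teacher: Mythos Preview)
Your proof is correct and follows essentially the same strategy as the paper's: replace the target by a formal model concentrated in even degrees, replace the source by a cofibrant model with generators in odd degrees, and conclude by parity. A few minor points of presentation are worth tidying up. First, homotopy fixed points for the trivial $SO(2)$-action produce $Hycom[[u]]$ (power series), not $Hycom\otimes_K K[u]$; this does not affect the parity argument. Second, the paper's $Grav$ differs from Getzler's algebraic gravity by an even shift, so the minimal model is $\Omega(\Sigma^{-2}Hycom^{\vee})$ rather than $\Omega(Hycom^{\vee})$; again the generators remain in odd degree, so your argument is unaffected. Third, the formality of $Grav$ is established in \cite{duponthorel} rather than in \cite{westerlandequivariant}, which is cited in the paper only for the construction of the spectral gravity operad; your route via $SO(2)$-equivariant formality of $D_2$ is reasonable, but the reference should be adjusted.
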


\begin{proof}
The proof is very analogous to the previous one. First of all, we observe that the formality of $\overline\Mm$ induces a quasi-isomorphism
\[(\overline{\Mm}\otimes K)^{hSO(2)}\simeq Hycom^{hSO(2)}\]
where both operads are given the trivial $SO(2)$-action. Moreover, we have  
\[Hycom^{hSO(2)}\simeq Hycom[[u]]\] where $u$ is a formal variable of degree $-2$. In other words $(\overline{\Mm}\otimes K)^{hSO(2)}$ is formal and its homology is concentrated in even degrees. The operad $Grav$ is also formal by \cite{duponthorel}. In order to apply the previous strategy it suffices to observe that a minimal model of $Grav$ is given by $\Omega(\Sigma^{-2}Hycom^{\vee})$, where $\Omega$ denotes the operadic cobar construction (see \cite[Theorem 4.6]{getzleroperads} but beware that there is a shift difference between Getzler's definition of gravity and the one that we are using here). In particular it is free on generators of odd degrees so the previous proof applies.
\end{proof}

\begin{proof}[Proof of Theorem \ref{maintheo}]
We treat the positive characteristic case. The other case is similar and easier. Let us simply write $L=\Sigma^2Lie\otimes K$. By definition, an $L$-algebra $A$ is quasi-abelian up to arity $p-1$ if the classifying map 
\[L_{\leq p-1}\to (\mathrm{End}_{A})_{\leq p-1}\]
is nullhomotopic. 
By hypothesis and using the main theorem of \cite{drummondhomotopically}, we know that the map $P\to \mathrm{End}_A$ classifying the $P$-algebra structure on $A$ factors as
\[P\to \overline{\mathcal M}\to \mathrm{End}_A\]

Now, thanks to Proposition \ref{prop : operadic}, the image of the following composite
\[[\overline{\mathcal M},\mathrm{End}_A]\to [\overline{\mathcal M}_{\leq p-1},(\mathrm{End}_A)_{\leq p-1}]\to [P_{\leq p-1},(\mathrm{End}_A)_{\leq p-1}]\to [L_{\leq p-1},(\mathrm{End}_A)_{\leq p-1}]\]
must be the class of the zero map.
\end{proof}

\begin{rema}\label{ddlemma}
If $(A,\wedge, d,\Delta)$ is a $BV$-algebra and the $d\Delta$-lemma is satisfied, there is a much simpler proof for Theorem \ref{maintheo}. Indeed, 
both the inclusion $\mathrm{Ker}(\Delta)\hookrightarrow A$ and the projection map $\mathrm{Ker}(\Delta)\to H_\Delta(A)$
are morphisms of Lie algebras, where $H_\Delta(A)$ has trivial differential and trivial Lie bracket.  Now, the 
$d\Delta$-condition 
\[\mathrm{Ker}(d)\cap \mathrm{Im}(\Delta)=\mathrm{Ker}(d)\cap \mathrm{Im}(\Delta)=\mathrm{Im}(d\Delta)\]
 ensures that these maps are quasi-isomorphisms.
\end{rema}

\begin{proof}[Proof of Theorem \ref{maintheo2}]
As in the previous proof, the map $D_2\to \mathrm{End}_A$ factors through $\overline{\Mm}$. After applying $SO(2)$ fixed points, we thus get a map 
\[Grav=(\Sigma^{\infty}_+D_2)^{hSO(2)}\to (\Sigma^{\infty}_+\Mm)^{hSO(2)}\to \mathrm{End}_{A^{hSO(2)}}\]
Thanks to Proposition \ref{prop : operadic2}, the first map is null homotopic after tensoring with $K$. It follows that the composed map is nullhomotopic as well. The last part of the statement of Theorem \ref{maintheo2} simply follows from the fact that the map 
\[\Sigma^2Lie\to \mathrm{End}_{A^{hSO(2)}}\]
factors through $Grav$.
\end{proof}

\section{The classical BTT Theorem}
The infinitesimal deformations of a complex manifold $M$ are governed by the Kodaira-Spencer Lie algebra
\[\Ll^{*}:=\mathcal{A}^{0,*}(\Theta)=\Gamma(M,\Lambda^*\overline{\Theta}^\vee\otimes \Theta),\]
defined by taking global sections of the sheaf of complex differential forms of type $(0,*)$ with coefficients in the holomorphic tangent bundle $\Theta$ of $M$. The differential on $\Ll^*$ is induced by the Dolbeault operator 
$\overline{\partial}$ and the Lie bracket is induced by the Lie bracket of vector fields.

A main objective in deformation theory is to integrate first-order
deformations: given a cohomology class $\alpha\in H^{1}(M,\Theta)$, the problem is to find a formal power series 
\[\xi(t)=\sum_{i\geq 1} \xi_it^i\text{ with }\xi_i\in \Ll^{1}\]
satisfying the Maurer-Cartan equation 
\[\overline{\partial}\xi(t)+{1\over 2}[\xi(t),\xi(t)]=0\]
and such that $[\xi_1]=\alpha$. This gives a recursive system of equations 

\begin{equation}\label{system}\tag{$\star$}
\left\{
\arraycolsep=4pt\def\arraystretch{1.4}
\begin{array}{l}
\overline{\partial} \xi_1=0\\
\overline{\partial} \xi_2+{1\over 2}[\xi_1,\xi_1]=0\\
\overline{\partial} \xi_3+{1\over 2}[\xi_1,\xi_2]=0\\
\cdots
\end{array} \right.
\end{equation}

 If this system has a solution for any choice of $\alpha$ then it is said that the 
deformation theory is \textit{unobstructed}. This is always the case if $H^2(M,\Theta)=0$, but not in general.
One sufficient condition for a deformation theory to be unobstructed is when it is governed by a quasi-abelian Lie algebra. This follows from the well-known fact that the deformation functors associated to quasi-isomorphic Lie algebras (over a field of characteristic zero) are isomorphic and explains the connection between the original BTT Theorem and Theorem \ref{maintheo}.

\begin{theo}[Classical BTT]\label{theo : classical btt}
 The deformation theory of any compact Calabi-Yau manifold is unobstructed.
\end{theo}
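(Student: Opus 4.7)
The plan is to realize the Kodaira-Spencer Lie algebra $\Ll^{*}$ as a sub-dg-Lie algebra of the underlying Lie algebra of a $BV$-algebra that satisfies the degeneration condition, and then to combine Theorem~\ref{maintheo}(1) with the classical fact that in characteristic zero the deformation functor of a dg-Lie algebra only depends on its quasi-isomorphism type.

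To build the $BV$-algebra, I would work with the Dolbeault complex of polyvector fields
\[\mathrm{PV}^{*,*}(M) := \bigoplus_{p,q} \Aa^{0,q}(\Lambda^{p}\Theta),\]
endowed with $\overline{\partial}$, the wedge product, and the Schouten--Nijenhuis bracket, so that it is a Dolbeault model of the polyvector Gerstenhaber algebra. The Calabi-Yau hypothesis provides a nowhere vanishing holomorphic volume form $\Omega$, and the contraction map $\iota_{\Omega}: \mathrm{PV}^{p,q}(M) \xrightarrow{\sim} \Aa^{n-p,q}(M)$ is an isomorphism of bigraded vector spaces. Using it I would transport the holomorphic de Rham operator $\partial$ into a square-zero operator $\Delta$ on polyvector fields. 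A standard check shows that $\Delta$ anticommutes with $\overline{\partial}$, satisfies the $BV$ relation, and generates the Schouten bracket, so that $(\mathrm{PV}^{*,*}(M), \overline{\partial}, \wedge, \Delta)$ becomes a $BV$-algebra.

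Under $\iota_{\Omega}$ the bicomplex $(\mathrm{PV}^{*,*}(M), \overline{\partial}, \Delta)$ is identified with the Hodge--to--de Rham bicomplex of $M$. Since $M$ is compact Kähler, this spectral sequence degenerates at $E_{1}$, which is precisely the degeneration condition from Section~\ref{sec : degeneration}. Theorem~\ref{maintheo}(1), applied over $K = \CC$, then ensures that the underlying dg-Lie algebra of $\mathrm{PV}^{*,*}(M)$ is quasi-abelian.

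Finally, $\Ll^{*}$ sits inside $\mathrm{PV}^{*,*}(M)$ as the weight-one piece $\mathrm{PV}^{1,*}(M)$, and this piece is closed under the induced $L_{\infty}$-operations because any $n$-ary bracket coming from a Gerstenhaber structure sends $n$ inputs of polyvector weight one to an output of weight one. Hence the quasi-abelian property descends to $\Ll^{*}$, and the standard equivalence of deformation functors under dg-Lie quasi-isomorphism yields that the Kodaira-Spencer functor is equivalent to the deformation functor of an abelian dg-Lie algebra, which is manifestly unobstructed. The main obstacle I foresee lies in the first step: the construction of $\Delta$ and the verification of the $BV$ axioms require careful sign and degree bookkeeping. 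This is however classical material that can be imported from the references cited earlier.
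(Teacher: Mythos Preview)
Your proposal is correct and follows essentially the same route as the paper: build the $BV$-algebra $(\Ll^{*,*},\overline\partial,\wedge,\Delta)$ on Dolbeault polyvector fields via contraction with the holomorphic volume form, verify a degeneration-type condition coming from Hodge theory, deduce quasi-abelianness of the full polyvector algebra, and then restrict to the weight-one direct summand $\Ll^{1,*}=\Ll^{*}$. The only difference is that the paper invokes the stronger $\partial\overline\partial$-lemma and the explicit zigzag of Remark~\ref{ddlemma} in place of Theorem~\ref{maintheo}(1), whereas you feed Hodge--to--de Rham degeneration into Theorem~\ref{maintheo}(1); both arguments are valid, and yours is arguably more in keeping with the paper's operadic theme.
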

\begin{proof}The Calabi-Yau condition ensures the existence of a nowhere vanishing holomorphic $m$-form $\Omega$, where $m$ is the complex dimension of the manifold.
 Contraction with the form $\Omega$ gives a natural isomorphism 
$\eta:\Ll^{p,q}\to \Aa^{m-p,q}$, where 
\[\Ll^{p,q}:=\Aa^{0,q}(\Lambda^p\Theta)=\Gamma(M,\Lambda^q \overline{\Theta}^\vee\otimes \Lambda^p\Theta).\]
The exterior product makes the bigraded vector space $\Ll^{*,*}$ into a commutative algebra and 
the Dolbeault operator induces a differential $\overline\partial$ of bidegree $(0,1)$.
Define an operator $\Delta$ on $\Ll^{*,*}$ of bidegree $(-1,0)$ by letting 
\[\Delta:=\eta^{-1} \circ \partial\circ \eta.\]
The classical Tian-Todorov Lemma states that the relation 
\[(-1)^{p}[\alpha,\beta]=\Delta(\alpha\wedge\beta)-\Delta(\alpha)\wedge\beta-(-1)^{p+1}\alpha\wedge\Delta(\beta)\]
holds for all $\alpha\in \Ll^p$ and $\beta\in \Ll^q$ (see for instance \cite{Huy}, Lemma 6.1.9). An extension of this equation to elements in $\Ll^{p,q}$, proven by Barannikov and Kontsevich in \cite{BaKo}, implies that the tuple
\[(\Ll^{*,*},\overline{\partial},\wedge,\Delta)\]
is a BV-algebra. In this case, the Lie bracket of $\Ll^{*,*}$ has bidegree $(-1,0)$ and is defined by the Schouten-Nijenhuis bracket for multi-vector fields 
and exterior products of forms. In particular, the Kodaira-Spencer Lie algebra $\Ll^*=\Ll^{1,*}$ is a direct summand (as a dg-Lie algebra).

Lastly, note that the $\partial\overline{\partial}$-lemma for compact Kähler manifolds 
implies  that the bicomplex $(\Ll^{*,*},\overline{\partial},\Delta)$ satisfies the
$d\Delta$-condition. Therefore by Remark \ref{ddlemma}, $\Ll^{*,*}$ is a quasi-abelian Lie algebra.
Since  $\Ll^{*}=\Ll^{1,*}$ is a direct summand of $\Ll^{*,*}$, 
it follows that $\Ll^*$ is also quasi-abelian.
\end{proof}

\begin{rema}
 The original approach of \cite{Tian}, \cite{Todo} to prove the BTT doesn't use the extended BV-algebra $\Ll^{*,*}$ introduced in this proof. Instead,
 the system of equations (\ref{system}) is recursively solved using both the Tian-Todorov Lemma and the $\partial\overline\partial$-lemma (see for instance \cite{Huy}).
\end{rema}

\begin{rema}[Extended deformation theory]
While the Kodaira-Spencer Lie algebra $\Ll^*$ governs the deformation teory of the underlying complex structure $J:T_M\to T_M$,
as shown by Gualtieri \cite{Gualtieri}, the extended Lie algebra $\Ll^{*,*}$ used in the above proof governs deformations of $M$ as families of generalized complex structures
$J:T_M\oplus T_M^\vee\to T_M\oplus T_M^\vee$, in the sense of Hitchin. In particular, for a Calabi-Yau manifold, the above proof also shows that the deformation theory of a Calabi-Yau manifold in the generalized sense is also unobstructed.
\end{rema}

\section{Some more BV-algebras of geometric origin}In this section we collect some  $BV_\infty$-algebras that arise from geometric structures and satisfy the degeneration property. In particular, Theorem \ref{maintheo} applies.

Let $(A,\wedge,d)$ be a commutative dg-algebra and assume $\Lambda\in \mathrm{End}(A)$ is an operator of degree $-2$ and order $\leq 2$. For all $k\geq 1$, let 
\[\Delta_k={1\over k!}\underbrace{[\Lambda,[\Lambda,\cdots[\Lambda}_{k},d]]].\]
Then the tuple $(A,\wedge,d,\Delta_k)$ is a commutative $BV_\infty$-algebra satisfying the degeneration condition (see \cite{CiWi}) and so by Theorem \ref{maintheo} the associated $L_\infty$-algebra is quasi-abelian.
Let us list a few geometric examples where this situation applies:

\begin{itemize}[leftmargin=*]
 \setlength\itemsep{.5em}
 \item (Poisson). On the de Rham algebra $\Aa_{\dR}(M)$ of a Poisson manifold $M$, consider 
 the operator $\Lambda:=i_\pi$ defined by contracting forms 
with the Poisson bi-vector field $\pi$. We obtain the BV-algebra of Koszul \cite{Koszul}, with $\Delta=\Delta_1=[i_\pi,d]$ and $\Delta_k=0$ for $k\geq 2$. By Theorem \ref{maintheo}, the underlying Lie algebra is quasi-abelian. In particular, it is formal and so we recover the main result of \cite{ShTa} (see also \cite{FiMa}).
Note that the de Rham algebra of $M$ is not formal as a commutative algebra in general.

\item (Hermitian). Let $M$ be a Hermitian manifold with almost complex structure $J$ and fundamental $(1,1)$-form $\omega$. Letting $\Lambda$ to be the adjoint to the Lefschetz operator we obtain a $BV_\infty$-algebra structure on $\Aa_{\dR}(M)$ with 
\[\Delta_1=-(d_c^*+T_c^*),\, \Delta_2=d\omega_c^*,\, \text{ and }\Delta_i=0\text{ for }i\geq 3.\]
Here $d_c^*$ is the formal adjoint to $d_c=J^{-1} d J$, $d\omega_c^*$ is the formal adjoint to $d\omega_c=J^{-1} d\omega J$ and $T_c^*$ is the adjoint to $T_c=J^{-1}[\Lambda, d\omega] J$. There is also a complex version for the Dolbeault algebra as well as a generalization of this construction for almost Hermitian manifolds (see \cite{CiWi} for details).

\item (Kähler). In the case of Kähler manifolds, the above two examples converge to give the same BV-algebra:
taking 
 again $\Lambda$ to be the adjoint of the Lefschetz operator we get that $\Delta_1=-d_c^*$ and $\Delta_i=0$ for all $i\geq 2$.
There is also a complex version of this BV-algebra with differential $\overline\partial$ and $\Delta_1=-i\partial^*$. In both cases, the $d\Delta$-lemma is satisfied and one gets formality of the de Rham algebra as a commutative, as a Lie and as a BV-algebra. Note that, in the Kähler case, the hypercommutative algebra arising from a canonical trivialization determined by the Kähler metric is also formal, as we show in \cite{CiHoBV}.
\end{itemize}

The next two examples do not exactly fit into the same framework as the above, but the construction is of similar nature:
\begin{itemize}[leftmargin=*]
 \setlength\itemsep{.5em}
 \item (Generalized Poisson) Consider a generalized Poisson structure on a manifold $M$, of the form $\pi=\pi_1+\pi_2+\cdots +\pi_d$, where $\pi_i$ is a $(i+1)$-multi-vector field. The operators $\Delta_k=[i_{\pi_k},d]$ define a $BV_\infty$-algebra structure on the de Rham algebra $\Aa_{\dR}(M)$ of $M$, satisfying the degeneration condition (see \cite{BrLa}).
 In the case $\pi=\pi_1$, this recovers Koszul's BV-algebra.

\item (Jacobi) A Jacobi structure on a smooth manifold $M$ is given by a bi-vector field $\pi$ and a vector field $\eta$ satisfying $[\pi,\pi]=2\eta\pi$ and $[\pi,\eta]=0$. The operators $\Delta_1=[i_\pi,d]$ and $\Delta_2=-i_\eta i_\pi$ and $\Delta_i=0$ for all $i\geq 3$, define a $BV_\infty$-algebra which satisfies degeneration (see \cite{DSV}, see also \cite{GuMu} for an abstract notion of Jacobi algebra). Again, when $\eta=0$ this recovers Koszul's BV-algebra for Poisson manifolds.

\end{itemize}

\section{The non-commutative BTT Theorem}

In the non-commutative setting,  a BTT theorem for proper Calabi-Yau differential graded (dg) algebras was sketched in 
\cite{KaKoPa} (see also \cite{Terilla,Iwa0,Tu,iwanari}) In this section, we fix a field $K$ of characteristic zero. For a dg-category over $K$, we denote by $\CH^*(\Cc)$ the cohomological Hochschild complex of $\Cc$ and by $\CH_*(\Cc)$ the homological Hochschild complex of $\Cc$. We denote by $\Cyc_{-}^*(\Cc)$ the negative cyclic complex of $\Cc$. Recall (see \cite[Theorem 2.1]{hoyoishomotopy}) that
\[\Cyc_{-}^*(\Cc)\simeq \CH_*(\Cc)^{hSO(2)}\]
where the $SO(2)$-action is induced by the Connes boundary on the Hochschild complex.

A Calabi-Yau structure of dimension $n$ on a dg-category $\Cc$ over $K$ is a cohomology class 
\[K[n]\to \Cyc_{-}^*(\Cc)\]
such that the induced class
\[\theta:K[n]\to\Cyc_{-}^*(\Cc)\to \CH_*(\Cc)\]
induces a quasi-isomorphism
\[\CH^*(\Cc)\xrightarrow{-\cap\theta} \CH_*(\Cc)[-n]\]
When, this is the case, the $D_2$ structure on $\CH^*(\Cc)$ given by the Deligne conjecture and the $SO(2)$-action on $\CH_*(\Cc)$ fit together into a $FD_2$ structure on $\CH^*(\Cc)\simeq \CH_*(\Cc)[-n]$ by the main result of \cite{costellotopological, bravcyclic}.

\begin{rema}
If $X$ is a Calabi-Yau variety, then it can be shown that the dg-category $\mathrm{Perf}(X)$ of complexes of perfect $\mathcal{O}_X$-modules has a Calabi-Yau structure. Moreover, the Hochschild-Kostant-Rosenberg theorem shows that, for a smooth and proper complex variety, there is a quasi-isomorphism
\[\CH^*(\mathrm{Perf}(X))\simeq\RR\Gamma(X,\mathrm{Sym}(\Theta[1]))\]
The right hand side can be computed by the explicit complex given in the proof of Theorem \ref{theo : classical btt}. In the Calabi-Yau case, both complexes admit a $FD_2$-algebra structure and the two structures should coincide although we do not know a reference for this fact. This observation shows that the commutative BTT theorem is a particular case of the non-commutative BTT theorem.
\end{rema}

\begin{theo}
Let $\mathcal{C}$ be a smooth and proper Calabi-Yau dg-category over a field $K$ of characteristic zero. Then the Hochschild complex $\CH^*(\mathcal{C})$ is quasi-abelian as a $\Sigma^2Lie\otimes K$-algebra.
\end{theo}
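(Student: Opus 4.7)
The strategy is to apply Theorem~\ref{maintheo}(1) to the $FD_2$-algebra $\CH^*(\Cc)$; the only item to verify is the degeneration condition, after which quasi-abelianness as a $\Sigma^2Lie\otimes K$-algebra is immediate. So the entire work of the proof is to check that hypothesis.

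First, I would identify the $FD_2(1)$-module structure on $\CH^*(\Cc)$. By construction, the $FD_2$-algebra structure assembles the Deligne $D_2$-action on cohomological Hochschild with the Calabi-Yau equivalence $\CH^*(\Cc)\simeq \CH_*(\Cc)[-n]$ and the Connes circle action on homological Hochschild, following Costello and Brav. Hence the arity one operations $FD_2(1)\simeq C_*(SO(2);K)$ act on $\CH^*(\Cc)$, through the Calabi-Yau quasi-isomorphism, as the Connes $B$-operator. In particular, in the notation of the remark following the definition of the degeneration condition, we have $\Delta=B$.

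Next, I would translate the degeneration condition into a known spectral sequence statement. By that same remark, the degeneration condition for $\CH^*(\Cc)$ is equivalent to the degeneration at the first page of the spectral sequence associated with the bicomplex $(\CH_*(\Cc)\otimes K[[u]],\, d+uB)$; after the shift by $[-n]$, this is exactly the noncommutative Hodge-to-de~Rham (Hochschild-to-negative-cyclic) spectral sequence. For a smooth and proper dg-category over a field of characteristic zero this degeneration is precisely Kaledin's theorem on noncommutative Hodge degeneration. Feeding this back into Theorem~\ref{maintheo}(1) then yields the desired conclusion.

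The main obstacle I anticipate is not the application of Theorem~\ref{maintheo} or the invocation of Kaledin's theorem, both of which are black boxes once cited, but the compatibility in the first step: one must check that the $SO(2)$-action packaged into the $FD_2$-structure by Costello and Brav coincides, under the Calabi-Yau quasi-isomorphism, with the standard Connes action on $\CH_*(\Cc)$. This is a coherence assertion built into the construction and should be extracted from those references, but it needs to be spelled out carefully since the whole reduction to Kaledin's theorem hinges on it.
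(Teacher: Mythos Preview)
Your proposal is correct and follows exactly the paper's approach: the paper's own proof is a two-line argument that the Hochschild complex has the degeneration property by Kaledin's theorem and that the result is then an application of Theorem~\ref{maintheo}. Your additional discussion of the compatibility between the Costello--Brav $SO(2)$-action and the Connes action is a reasonable technical caveat, but the paper does not address it either and simply treats it as built into the cited constructions.
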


\begin{proof}
The Hochschild complex has the degeneration property by a theorem of Kaledin (see \cite{kaledinspectral} and \cite{mathewkaledin}) so this is simply an application of Theorem \ref{maintheo}.
\end{proof}

There is also a quantum version of the non-commutative BTT theorem given by the following.

\begin{theo}
Let $\mathcal{C}$ be a smooth and proper Calabi-Yau dg-category over a field $K$ of characteristic zero. Then the negative cyclic complex $\Cyc_{-}^*(\mathcal{C})$ is quasi-abelian as a $Grav\otimes K$-algebra and in particular as a $\Sigma^2 Lie\otimes K$-algebra.
\end{theo}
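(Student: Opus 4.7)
The plan is to argue in direct parallel with the proof of the preceding theorem, but invoking Theorem \ref{maintheo2} in place of Theorem \ref{maintheo}. First, the smooth and proper Calabi-Yau hypothesis equips the Hochschild cochain complex $\CH^*(\mathcal{C})$ with a natural structure of dg-algebra over $FD_2\otimes K$ via the combination of the Deligne conjecture and the results of \cite{costellotopological, bravcyclic} recalled before the preceding theorem. Second, I would invoke Kaledin's theorem (see \cite{kaledinspectral}, \cite{mathewkaledin}) to conclude that this $FD_2$-algebra satisfies the degeneration condition: Hochschild homology of a smooth and proper dg-category in characteristic zero is homotopically trivial as a module over $C_*(SO(2))$, and transporting across the Calabi-Yau quasi-isomorphism $\CH^*(\mathcal{C})\simeq \CH_*(\mathcal{C})[-n]$ yields the degeneration for $\CH^*(\mathcal{C})$.

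With both hypotheses verified, Theorem \ref{maintheo2} applies and asserts that the $Grav\otimes K$-algebra $\CH^*(\mathcal{C})^{hSO(2)}$ is quasi-abelian. It then remains to identify this homotopy fixed point complex, as a $Grav$-algebra, with $\Cyc_{-}^*(\mathcal{C})$ up to a degree shift. Since the Calabi-Yau duality map $\CH^*(\mathcal{C})\xrightarrow{-\cap\theta}\CH_*(\mathcal{C})[-n]$ is $SO(2)$-equivariant by construction of the $FD_2$-structure, and since $\Cyc_{-}^*(\mathcal{C})\simeq \CH_*(\mathcal{C})^{hSO(2)}$ by \cite[Theorem 2.1]{hoyoishomotopy}, one gets
\[\CH^*(\mathcal{C})^{hSO(2)}\simeq \CH_*(\mathcal{C})^{hSO(2)}[-n]\simeq \Cyc_{-}^*(\mathcal{C})[-n].\]
Quasi-abelianness is a homotopy-invariant property stable under degree shifts, so $\Cyc_{-}^*(\mathcal{C})$ inherits it as a $Grav\otimes K$-algebra. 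The final assertion then follows by restriction of scalars along the map $\Sigma^2Lie\to Grav$ from Section 2.

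The main delicate point in the argument is the $SO(2)$-equivariance of the Calabi-Yau duality quasi-isomorphism, which is what ensures that passing to homotopy fixed points intertwines the $Grav$-action on $\CH^*(\mathcal{C})^{hSO(2)}$ produced by Theorem \ref{maintheo2} with a $Grav$-action on $\Cyc_{-}^*(\mathcal{C})$. This equivariance is essentially built into the construction of the $FD_2$-structure of \cite{costellotopological, bravcyclic}, where $SO(2)$ acts on the arity-one operations precisely through the Connes boundary on $\CH_*(\mathcal{C})$, but it is the step that most deserves careful justification in a detailed write-up.
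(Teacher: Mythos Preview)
Your proposal is correct and follows the same approach as the paper, which simply states that the theorem is a direct application of Theorem \ref{maintheo2}. You have merely unpacked the implicit steps---verifying the $FD_2$-structure and the degeneration condition via Kaledin's theorem, and then identifying $\CH^*(\mathcal{C})^{hSO(2)}$ with $\Cyc_{-}^*(\mathcal{C})$ up to shift---that the paper leaves to the reader.
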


\begin{proof}
This is a direct application of Theorem \ref{maintheo2}.
\end{proof}

\bibliographystyle{amsalpha}
\bibliography{biblio}

\end{document}